\documentclass[12pt]{amsart}

\usepackage{amssymb,enumerate,amsthm,yfonts,mathrsfs, esint}
\usepackage{amsmath}
\usepackage{bbm}           
\usepackage{bm} 
\usepackage{eso-pic,graphicx}
\usepackage{tikz}

\usepackage{mathtools}
\usepackage{rotating}

\usepackage[colorlinks=true, pdfstartview=FitV, linkcolor=blue, citecolor=blue, urlcolor=blue]{hyperref}

\textwidth=1.2\textwidth
\calclayout

\makeatletter
\def\Ddots{\mathinner{\mkern1mu\raise\p@
\vbox{\kern7\p@\hbox{.}}\mkern2mu
\raise4\p@\hbox{.}\mkern2mu\raise7\p@\hbox{.}\mkern1mu}}
\makeatother

\def\XXint#1#2#3{{\setbox0=\hbox{$#1{#2#3}{\int}$}
\vcenter{\hbox{$#2#3$}}\kern-.5\wd0}}

\newtheorem{theorem}{Theorem}[section]

\newtheorem{corollary}[theorem]{Corollary}

\newtheorem{definition}[theorem]{Definition}
\newtheorem{example}[theorem]{Example}

\newtheorem{lemma}[theorem]{Lemma}

\newtheorem{remark}[theorem]{Remark}

\numberwithin{equation}{section}
 \allowdisplaybreaks

\def\R{\mathbb R}

\def\bey{\begin{eqnarray*}}
\def\eey{\end{eqnarray*}}

\def\D{{\mathscr D}}

\newcommand\f[2]{\frac{#1}{#2}}
\newcommand\de{\delta}

\newcommand\ee{\varepsilon}

\title[A new sufficient two-weighted bump assumption for $L^p$ boundedness]{A new sufficient two-weighted bump assumption for $L^p$ boundedness of Calder\'on-Zygmund operators}

\author{Theresa C. Anderson}

\address{Theresa C. Anderson, Department of Mathematics, Brown
  University, Providence, RI 02912, USA}
\email{theresa\_anderson@brown.edu}

\thanks{The author is supported by an National Science Foundation graduate student fellowship.}

\subjclass[2010]{}

\keywords{}

\date{March, 27, 2013}

\begin{document}

\begin{abstract}
We present new results on the two-weighted boundedness of singular integral operators and $L^p$ boundedness of the Orlicz maximal function.  Namely, we extend a theorem of P\'erez regarding the necessary and sufficient conditions for the boundedness of the Orlicz maximal function as well as give a new sufficient two-weighted boundedness assumption for Calder\'on-Zygmund singular integrals.   
\end{abstract}
\maketitle

\section{Introduction}
An active area of harmonic analysis lately has been discovery of the precise dependence of the constants in weighted norm inequalities for Calder\'on-Zygmund singular integral operators (CZOs).  The recently solved so-called $A_2$ theorem (\cite{Hy2012}) gives the sharp dependence of the constant on the weight characteristic in one-weighted norm inequalities for CZOs.  Additionally, in searching for a proof for this theorem, analysts developed the new theory of non-homogeneous analysis.  A natural follow-up to the one-weighted results are related questions about two-weighted norm inequalities for CZOs, but in most cases basic bounds are not even known.  For example, necessary and sufficient conditions for $L^p$ boundedness are only known for a few operators, such as the maximal function \cite{sawyer82b} and Hilbert transform \cite{laceysawyerhilbert}, \cite{2013arXiv1301.4663L}.  The two-weighted theory is a developing field, with relatively little known compared to the one-weighted theory.  We hope that the background and results discussed further propel development in these areas.   

 We seek to determine sufficient conditions on the weights $(w,v)$ that give us a two-weighted bound for most CZOs, that is, a bound of the form: \[\|Tf\|_{L^p(w)}\leq C\|f\|_{L^p(v)}.\]  The bump condition theory that we will discuss was introduced to help answer this question.  We will relate some history of determining necessary and/or sufficient conditions for $L^p$ boundedness in the classical case of $\R^n$ with Lebesgue measure.  We assume $1<p<\infty$ unless otherwise stated throughout this article.

An initial idea to find sufficient conditions for $L^p$ boundedness in the two-weight case was to form a two-weighted analogue of the Muckenhoupt $A_p$ condition, that is, for a pair of weights $(w,v)$:
\begin{equation}
\label{twoweightAp}
\sup_Q\fint_Qwd\mu\left( \fint_Q v^{-p'/p}d\mu\right)^{p-1}<\infty,
\end{equation}
 where $Q$ is a cube and we use the notation $\fint_Q = \frac{1}{\mu(Q)}\int_Q$.  However, \ref{twoweightAp} was only necessary and sufficient for the maximal function to be bounded on weak $L^p$, and only necessary for boundedness of some "interesting operators" under consideration (see \cite{2012arXiv1202.2406N} and  \cite{sawyer82b} for a discussion of this).  Therefore, other conditions were then proposed on $(w,v)$ to give sufficient boundedness conditions for CZOs.  Neugebauer helped to pioneer this effort \cite{MR687633}, by "bumping up" the power on the weights into a condition like:
 \begin{equation}
 \label{Neubump}
 \sup_Q\left( \fint_Qw^{pr}d\mu\right) ^{1/pr}\left( \fint_Q v^{-p'r}d\mu\right)^{1/p'r}<\infty,
 \end{equation}
 for $r>1$.  This condition was strong enough to ensure that both the maximal function and CZOs were bounded from $L^p(v^p)$ to $L^p(w^p)$.  A further generalization of \ref{twoweightAp} came from Orlicz space theory.  Orlicz norms provide a finer scale of norms ''in between" the $L^p$ ones, which we will make precise later.  Andrei Lerner used Orlicz $B_p$ norms on both weights in his assumption to prove sufficiency for strong $L^p$ boundedness of all CZOs, answering a question of Cruz-Uribe and P\'erez \cite{Lern2012}.
 
 This paper contains two main results.
 The first is a \emph{Coifman-Fefferman inequality}: bounding a singular integral operator by a corresponding maximal function, which in many cases, leads to a two-weighted bound for CZOs.  Our assumptions are related to those in the still open ''separated bump conjecture" (see \cite{MR2797562}, \cite{CRV2012}, \cite{ACM}, \cite{NRV}), but not exactly the same since they represent a hybrid of both Neugebauer's and Lerner's assumptions.  The proof is a twist on an extrapolation technique from one-weighted theory.  As far as we are aware, the use of the reverse H\"older extrapolation in the two-weighted theory is new.

To state this result, we recall the sparse operators of Lerner \cite{lerner-IMRN2012}, \[T^Sf = \sum_{Q\in S}(\fint_Q f)\chi_Q,\] where $Q$ are dyadic cubes and $S$ represents a sparse family: a collection of disjoint dyadic cubes such that for $Q\in S$, \[\mu\left( \bigcup_{Q'\subsetneq Q, Q'\in S}Q'\right)\leq \frac{\mu(Q)}{2},\] and mention that in \cite{lerner-IMRN2012}, Lerner proved that every CZO can be bounded above in norm by a supremun of these sparse operators.  The theorem of Lerner has been proven in the case of spaces of homogeneous type in \cite{ACM} and \cite{AV-2012}.
    \begin{theorem}
    \label{mainextrapolation}
    Let $Y$ be a Banach function space (if $Y = L^B$ this is the Orlicz norm), $T^S$ be a sparse operator on $\R^n$ with Lebesgue measure and $1<p<\infty$.  Let $(w,v)$ be weights such that  $\sup_Q\|w\|_{q,Q}\|v^{-1}\|_{Y,Q}\leq K$ for some $q >p$ and all $Q\in S$.  Then 
  \begin{equation}
  \label{TsandMy'}
  \int_{\R^n}(T^Sfw)^pdy\leq C^p\int_{\R^n}M_{Y'}(fv)(y)^pdy,
  \end{equation}
  where $M_{Y'}$ is a generalization of the Orlicz maximal function.
  \end{theorem}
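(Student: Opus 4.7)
The natural approach is to dualize $L^p$ and exploit sparsity. Fix a nonnegative $g\in L^{p'}(\R^n)$ with $\|g\|_{L^{p'}}=1$; by duality it suffices to bound
\[\int_{\R^n}T^Sf\cdot w\cdot g\,dy=\sum_{Q\in S}\mu(Q)\Bigl(\fint_Q f\Bigr)\Bigl(\fint_Q wg\Bigr).\]
The plan is to decouple each average from its weight factor using two different H\"older inequalities---a generalized H\"older in the Orlicz pair $(Y,Y')$ for the first average and classical H\"older at the scale $(q,q')$ for the second---then invoke the bump hypothesis to collapse the two weight norms into the constant $K$, and finally recognize what remains as an $L^p$--$L^{p'}$ pairing of two maximal functions acting on $fv$ and on $g$.

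Concretely, writing $f=(fv)\,v^{-1}$ and applying Orlicz H\"older gives $\fint_Q f\le 2\,\|fv\|_{Y',Q}\,\|v^{-1}\|_{Y,Q}$, while classical H\"older produces $\fint_Q wg\le\|w\|_{q,Q}\,\|g\|_{q',Q}$. Multiplying the two and using the hypothesis $\|w\|_{q,Q}\|v^{-1}\|_{Y,Q}\le K$ eliminates the weights. Since $\|fv\|_{Y',Q}\le M_{Y'}(fv)(x)$ and $\|g\|_{q',Q}\le M_{q'}g(x)$ for every $x\in Q$, choosing the pairwise disjoint major subsets $E_Q\subset Q$ guaranteed by sparsity and using $\mu(Q)\le 2\mu(E_Q)$ converts the sum into a single integral:
\[\sum_{Q\in S}\mu(Q)\,\|fv\|_{Y',Q}\,\|g\|_{q',Q}\le 2\int_{\R^n}M_{Y'}(fv)(y)\,M_{q'}g(y)\,dy.\]

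A final H\"older step with exponents $(p,p')$ separates the two maximal functions. Here the hypothesis $q>p$ is essential: it is equivalent to $q'<p'$, which is precisely the range in which $M_{q'}=(M|\cdot|^{q'})^{1/q'}$ is bounded on $L^{p'}$. Invoking this bound on $g$ and taking the supremum over test functions yields \eqref{TsandMy'}.

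The main obstacle is conceptual rather than computational: identifying the correct asymmetric splitting. Bumping $w$ with the classical $L^q$ norm (in the spirit of Neugebauer) while bumping $v^{-1}$ with the Orlicz $Y$-norm (in the spirit of Lerner) lets the reverse-H\"older gain on the $w$-side carry the entire $L^{p'}$-boundedness burden; no mapping property of $M_{Y'}$ on $L^p$ enters the proof, which is what makes the inequality useful as a Coifman--Fefferman type bound. All remaining steps are routine Orlicz duality and sparse dyadic combinatorics.
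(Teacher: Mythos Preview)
Your argument is correct. It is, however, genuinely different in structure from the paper's proof, and it is worth spelling out the contrast.

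The paper proceeds in two stages. First it establishes a weighted $L^1$ inequality: for every weight $\rho\in RH_{q'}$,
\[
\int_{\R^n} T^Sf\cdot w\,\rho\,d\mu \le C\int_{\R^n} M_{Y'}(fv)\,\rho\,d\mu,
\]
using exactly the same Orlicz and $(q,q')$ H\"older splittings you use, together with the $RH_{q'}$ bound $\bigl(\fint_Q\rho^{q'}\bigr)^{1/q'}\le\kappa\,\rho(Q)/\mu(Q)$ and the $A_\infty$ sparsity estimate $\rho(Q)\le\gamma\,\rho(E(Q))$. Second, it reaches the $L^p$ inequality by a Rubio de Francia extrapolation tailored to reverse H\"older classes: given a dualizing $h\in L^{p'}$, one forms $\tilde R(h)=R(h^{q'})^{1/q'}$, observes that $\tilde R(h)\in RH_{q'}$, and applies Step~1 with $\rho=\tilde R(h)$. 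The author flags this reverse H\"older extrapolation as the methodological novelty.

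Your route bypasses the extrapolation entirely. You dualize against $g\in L^{p'}$ from the outset, apply the same two H\"older inequalities, exploit Lebesgue sparsity $\mu(Q)\le 2\mu(E_Q)$ directly (no $A_\infty$ lemma needed), and close with the $L^{p'}$-boundedness of $M_{q'}$, which holds precisely because $q'<p'$. In effect, your last step is the ``unwound'' content of the paper's extrapolation: the Rubio de Francia weight $\tilde R(h)$ is engineered so that $M_{q'}\tilde R(h)\lesssim\tilde R(h)$, which is what you invoke globally as an operator bound rather than pointwise. Your version is shorter and more elementary; the paper's version isolates a reusable extrapolation principle (stated separately as a theorem in Section~6) that may have independent value beyond this particular estimate.
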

  If the maximal function $M_{Y'}$ is bounded on weighted $L^p$, then we obtain a two-weighted $L^p$ bound for CZOs.  
  
  The proof of this theorem is general enough that it extends to spaces of homogeneous type.
  
  Secondly, we prove that the Orlicz maximal function, an extension of the usual Hardy-Littlewood maximal function, is bounded on $L^p$ in spaces of homogeneous type without assuming that the associated Young function is doubling.  This generalizes a result of P\'erez \cite{perez95}.  The importance of the Orlicz maximal function is underscored by its appearance in the proof of the first main result.  Moreover, this maximal function has become an integral part of the bump theory and the search for sufficient conditions for two-weighted bounds.
    
  This paper is organized in the following fashion.  We introduce some background in section 2 and mention a few preliminary results in section 3, including the result on the Orlicz maximal function.  Section 4 contains some background for the two-weighted theorem, proved in section 5.  In section 6, we give some corollaries, including a strong two-weighted bound for Calder\'on-Zygmund operators, and examples.    
  
  For notation, we use the standard $H\lesssim I$ to mean $H\leq CI$, where $C$ is a constant.  We typically use the letters $C$ and $K$ to denote constants, even if they change from line to line.
  
  \section{Background}
  In \cite{perez95}, Carlos P\'erez defined a property of Young functions termed ``the $B_p$ condition", which is important to boundedness of different types of maximal functions, important in weighted theory.
  
  \begin{definition}
  \label{Young1}
  A Young function is a a convex, increasing, continuous function $A :[0,\infty)\to[0,\infty)$ such that $A(0) = 0$ (see \cite{perez94}).
  \end{definition}
We sometimes normalize $A(1) = 1$.
  
  \begin{definition}
  Let $A$ be a Young function.  The complementary Young function $\bar{A}$ is defined as:
  \[\bar{A}(s) = \sup_t\{st-A(t)\},\] (see \cite{MR2797562}).
  \end{definition}  
  
  We often work with spaces of homogeneous type, which occur frequently in applications.
  A space of homogeneous type  (abbreviated SHT) is a triple
  $(X, \rho, |\cdot|)$ where $X$ is a set, $\rho$ is a quasimetric, and the positive measure $\mu$ is \emph{doubling}, that is $$0<\mu(B(x_0,2r))\leq C_d\mu(B(x_0,r))<\infty.$$
  
  Now, we can use a Young function to define the Orlicz norm.
  \begin{definition}
  Given a Young function $A$ and a set $E$ such that $\mu(E)>0$, define the Orlicz space norm of a function $v\in L^1_{loc}$,  \[\|v\|_{A,E} = inf\{\lambda>0:\fint_EA\left( \frac{|v(x)|}{\lambda}\right) d\mu \leq 1\}.\]
  \end{definition}
  \begin{definition}
  The corresponding Orlicz maximal function is
  \[M_A(v(x)) = \sup_{B\ni x}\|v\|_{A,B},\] where $B$ ranges over balls.
  \end{definition}
  Note that sometimes we define this maximal function with respect to cubes or to dyadic cubes.
  
  \begin{remark}
  Note that if $A = t^p$, this norm becomes the normalized $L^p$ norm, that is: $\|f\|_{p,Q} = \left( \fint_Q f^p\right) ^{1/p}.$
  \end{remark}

All Young functions satisfy \emph{Young's inequality}, that is \[ab\leq A(a)+\bar{A}(b).\]   We also have the \emph{generalized H\"older's inequality}: \[\int_Xfg\leq \|f\|_A\|g\|_{\bar{A}}.\]

  \begin{definition}
  A Young function $A(t)$ is doubling if \[A(2t)\leq CA(t)\] for all $t$ and for a fixed constant $C$.
  \end{definition}  
  
  Some examples of Young functions include power functions $t^s$, log bumps \[\frac{t^p}{\ln^{1+\de}(1+t)}\] and loglog bumps \[\frac{t^p}{\ln(1+t)(\ln\ln(1+t))^{1+\de}}.\]
  
  \section{$B_p$ classes and maximal function results}
 The following class of Young functions is useful in both boundedness of associated maximal functions and in two-weight norm inequalities.
  
  \begin{definition}
  \label{Bp1}
  A Young function $A$ belongs to $B_p$ (we write $A\in B_p$) for some $1<p<\infty$ if 
  \[
  \int_c^{\infty}\frac{A(t)}{t^p}\frac{dt}{t}<\infty,\]
  for some $c>0$.
  \end{definition}

  It would be desirable to determine when the Orlicz maximal function is bounded.  We start by mentioning the $L^{\infty}$ bound, whose proof is included for completeness.
  \begin{lemma}
  If $A$ is a Young function then $\|M_Af\|_{L^{\infty}}\lesssim \|f\|_{L^{\infty}}$.
  \end{lemma}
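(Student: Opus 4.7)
The plan is a short, direct verification from the definition of the Orlicz norm. Let $f \in L^{\infty}$ with $M = \|f\|_{L^{\infty}}$, which we may assume is finite (otherwise the inequality is trivial). Because $A$ is continuous with $A(0)=0$, we can find a fixed $t^{*}>0$ such that $A(t^{*}) \leq 1$; under the usual normalization $A(1)=1$, one may simply take $t^{*}=1$.

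Now fix any ball $B$ containing a point $x$ and set $\lambda = M/t^{*}$. Since $|f(y)| \leq M$ for almost every $y$, we have $|f(y)|/\lambda \leq t^{*}$ almost everywhere on $B$, and using that $A$ is increasing we obtain
\[
\fint_{B} A\!\left(\frac{|f(y)|}{\lambda}\right) d\mu(y) \leq A(t^{*}) \leq 1.
\]
By the definition of the Orlicz norm this means $\|f\|_{A,B} \leq \lambda = M/t^{*}$. Taking the supremum over balls $B \ni x$ and then over $x$ yields $\|M_{A} f\|_{L^{\infty}} \leq (1/t^{*}) \|f\|_{L^{\infty}}$, which is the desired estimate. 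In particular, with the normalization $A(1)=1$ one gets the constant $1$.

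There is essentially no obstacle here: the only subtlety is ensuring the existence of some $t^{*}>0$ with $A(t^{*}) \leq 1$, which is immediate from continuity of $A$ at $0$ together with $A(0)=0$. Everything else is just unwinding the definition of $\|\cdot\|_{A,B}$ and of $M_{A}$.
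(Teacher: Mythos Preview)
Your argument is correct and essentially identical to the paper's: both pick $\lambda$ proportional to $\|f\|_{L^\infty}$, use monotonicity of $A$ together with $A(1)=1$ to get the averaged Orlicz integrand $\leq 1$, and then take the supremum over balls. The only cosmetic difference is that you phrase the choice of $t^{*}$ slightly more generally before specializing to $t^{*}=1$, whereas the paper assumes the normalization $A(1)=1$ from the start.
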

  \begin{proof}
  Let $Q$ be a cube and let $\lambda = \|f\|_{\infty}$, then   $\frac{|f(y)|}{\lambda}\leq 1$ a.e on $Q$.  Since $A$ is increasing and $A(1)=1$, it follows that \[A\left( \frac{|f(y)|}{\lambda}\right) \leq 1\] a.e. on $Q$.  Hence \[\fint_Q A\left( \frac{|f(y)|}{\lambda}\right) \leq 1.\]  Therefore \[\inf_{\lambda >0}\{\lambda : \fint_Q A\left( \frac{|f(y)|}{\lambda}\right) \leq 1\}\leq \|f\|_{\infty}.\]  This is true for all $Q$, so by taking suprenums, the result follows.
  \end{proof}
  
  Many $L^p$ results concerning the maximal function can be found in \cite{perez94}, \cite{perez95}, \cite{perezwheeden}.  One of the most useful criteria is an $L^p$ boundedness result on $\R^n$ explicitly tied to the $B_p$ condition.
  
  \begin{theorem} 
  \label{Bpimplies}
   \cite{perez94} Let $1<p<\infty$ and $A$ be a doubling Young function with $\frac{A(t)}{t}$ increasing, $A(t) \to \infty$ as $t\to \infty$, and $A(1) = 1$.  Then the following are equivalent:
  \begin{enumerate}
  \item{$A\in B_p$}
  \item{There is a constant $c$ such that $\|M_Af\|_p\leq c\|f\|_p$ for all nonnegative $f$.}
  \end{enumerate}
  \end{theorem}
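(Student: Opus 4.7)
The two implications would be handled separately. For $(1)\Rightarrow(2)$ the strategy is a distributional estimate in the spirit of the Calder\'on--Zygmund proof of the Hardy--Littlewood maximal inequality, adapted to the Orlicz setting; for the converse I would test against an indicator function and read off the integral condition.

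Start with $(1)\Rightarrow(2)$. Fix a level $t>0$ and split $f=f_1+f_2$ with $f_1=f\chi_{\{f>t/2\}}$; the preceding $L^\infty$ lemma gives $M_Af_2\le t/2$, so $\{M_Af>t\}\subseteq\{M_Af_1>t/2\}$. Each $x$ in the latter set lies in a ball $B$ with $\fint_B A(2f_1/t)\,dy>1$, and a Vitali-type selection of such balls yields the key estimate
\[
|\{M_Af>t\}|\le C\int_{\{f>t/2\}}A\!\left(\tfrac{2f(y)}{t}\right)dy.
\]
Substituting into the layer-cake formula $\|M_Af\|_p^p=p\int_0^\infty t^{p-1}|\{M_Af>t\}|\,dt$, swapping integrals by Fubini, and changing variables $u=2f(y)/t$ produces
\[
\|M_Af\|_p^p\le C\|f\|_p^p\int_1^\infty\frac{A(u)}{u^p}\frac{du}{u},
\]
which is finite exactly under the $B_p$ hypothesis.

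For $(2)\Rightarrow(1)$ I would test against $f=\chi_{B(0,1)}$. A direct computation of the Orlicz norm gives $\|\chi_{B(0,1)}\|_{A,B(0,2R)}=1/A^{-1}((2R)^n)$, so that $M_Af(x)\ge 1/A^{-1}((2|x|)^n)$ for $|x|\ge 1$. Passing to polar coordinates and substituting $s=A^{-1}((2R)^n)$ converts the $L^p$ bound on $M_Af$ into $\int_c^\infty A'(s)/s^p\,ds<\infty$; convexity of $A$ together with $A(0)=0$ forces $A'(s)\ge A(s)/s$, so the $B_p$ integral is dominated by the integral above and (1) follows.

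The main obstacle is the Vitali step in the first direction: one must confirm that the balls produced by the level-set covering have bounded radius, which is handled by observing that $\fint_B A(f_1/t)\to 0$ as $|B|\to\infty$ for the fixed truncated $f_1$ with $A(f_1/t)\in L^1$. The remaining hypotheses enter as follows: doubling of $A$ absorbs the factor of $2$ in $A(2f/t)$ at a fixed multiplicative cost, monotonicity of $A(t)/t$ underlies the comparison $A'(s)\ge A(s)/s$ used in the reverse direction, and $A(1)=1$ together with $A(t)\to\infty$ ensures $A^{-1}$ is well-defined on the relevant range.
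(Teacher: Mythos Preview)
Your argument is correct and is essentially the classical one due to P\'erez. Note, however, that the paper does not actually supply its own proof of this statement: it is quoted from \cite{perez94}, and only the extension to spaces of homogeneous type (Theorem~\ref{regularityBp}) is argued in the paper. There the forward direction $(1)\Rightarrow(2)$ is again deferred to P\'erez's original proof, which is precisely the weak-type/Vitali/layer-cake computation you outline.

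For the reverse direction $(2)\Rightarrow(1)$ the paper's proof of Theorem~\ref{regularityBp} is the same idea as yours, specialized to $\R^n$: test $M_A$ on an indicator $\chi_B$, use that $\|\chi_B\|_{A,B'}=1/A^{-1}(\mu(B')/\mu(B'\cap B))$, and integrate the resulting lower bound. The only difference is in the final change of variables. The paper works with the layer-cake formula directly and substitutes $t\mapsto 1/t$, landing on $\int A(t)\,t^{-p-1}\,dt$ without ever differentiating $A$; you instead substitute $s=A^{-1}((2R)^n)$, obtain $\int_c^\infty A'(s)s^{-p}\,ds<\infty$, and then invoke $A'(s)\ge A(s)/s$. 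Both routes are valid; the paper's avoids the (minor) issue of differentiability of $A$.

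One small correction to your commentary: doubling of $A$ is not actually needed to ``absorb the factor of $2$'' in $A(2f/t)$. After Fubini and the substitution $u=2f(y)/t$ the factor $2^p$ simply falls out of the computation, and the resulting bound reads $\|M_Af\|_p^p\le C\,2^p\|f\|_p^p\int_1^\infty A(u)u^{-p-1}\,du$ with no appeal to doubling. This is consistent with the paper's remark that the doubling hypothesis on $A$ is in fact superfluous (see the sentence following the statement and Theorem~\ref{regularityBp}).
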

    Though doubling was originally assumed, it is actually not needed (see below \ref{regularityBp}).
    
   If $(X,\rho,\mu)$ is an SHT and $\mu(X) = \infty$ then this theorem is still true \cite{MR2022366}.
   However, if $\mu(X) < \infty$, this theorem fails (see the counterexample mentioned in \cite{MR2022366}).
 
  Inspired by a question from Cruz-Uribe and recent result from \cite{2012arXiv1211.2603L}, we present the following which extends \ref{Bpimplies} to the setting of infinite measure spaces of homogeneous type, without requiring the doubling of the Young function.
  
  \begin{theorem}
  \label{regularityBp}
Let $(X,\rho,\mu)$ be a space of homogeneous type with doubling constant $C_d$ and $\mu(X)=\infty$.  Let the assumptions of \ref{Bpimplies} hold.  
Then $M_A$ is bounded on $L^p$ if and only if $A\in B_p$.
  \end{theorem}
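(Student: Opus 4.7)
The plan is to use the fundamental equivalence
\[
M_A f(x) > \lambda \iff M\bigl(A(|f|/\lambda)\bigr)(x) > 1,
\]
immediate from the definition of the Luxemburg norm and the monotonicity of $\mu \mapsto \fint_B A(|f|/\mu)\,d\mu$; here $M$ denotes the usual Hardy-Littlewood maximal operator, which is of weak type $(1,1)$ on any SHT by Coifman-Weiss. This identity reduces every super-level set of $M_A$ to one of $M$ applied to $A(|f|/\lambda)$.

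\emph{Sufficiency.} Given $A \in B_p$, fix $\lambda > 0$ and truncate $f = f_1 + f_2$ with $f_1 = f\chi_{\{|f| > \lambda/2\}}$. Since $|f_2| \le \lambda/2$ pointwise and $A(1)=1$, one checks $\|f_2\|_{A,B} \le \lambda/2$ for every ball $B$, hence $M_A f_2 \le \lambda/2$; subadditivity of $M_A$ (via the triangle inequality for the Luxemburg norm) then gives $\{M_A f > \lambda\} \subset \{M_A f_1 > \lambda/2\}$. The identity above together with the weak $(1,1)$ of $M$ yields
\[
\mu\{M_A f > \lambda\} \le C \int_{\{|f|>\lambda/2\}} A(2|f|/\lambda)\,d\mu.
\]
Inserting this into the layer-cake identity, applying Fubini, and making the change of variables $t=2|f(x)|/\lambda$ on each fiber collapses the $\lambda$-integration to $(2|f(x)|)^p \int_1^\infty A(t)/t^{p+1}\,dt$. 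Hence
\[
\|M_Af\|_p^p \le C \left(\int_1^\infty \frac{A(t)}{t^{p+1}}\,dt\right) \|f\|_p^p,
\]
and the remaining integral is finite exactly when $A \in B_p$. The crucial point is that the truncation converts the would-be divergent integral $\int_0^\infty A(t)/t^{p+1}\,dt$ into the convergent $\int_1^\infty$, which is precisely what allows the argument to sidestep any doubling hypothesis on $A$.

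\emph{Necessity.} Test $M_A$ on $f=\chi_{B_0}$ for an arbitrary ball $B_0 = B(x_0,r_0)$. A direct computation gives $\|f\|_{A,B} = 1/A^{-1}(\mu(B)/\mu(B_0))$ for any $B \supset B_0$, so $\{M_A f > \lambda\}$ contains every such $B$ with $\mu(B) < A(1/\lambda)\mu(B_0)$. Because $\mu(X)=\infty$ and $\mu$ is doubling, the largest enlargement $B(x_0, 2^k r_0)$ meeting this bound satisfies $\mu(B(x_0, 2^k r_0)) \gtrsim A(1/\lambda)\mu(B_0)/C_d$, so
\[
\mu\{M_A f > \lambda\} \gtrsim A(1/\lambda)\,\mu(B_0) \qquad (\lambda \in (0,1]).
\]
Layer-cake and the reverse substitution $t=1/\lambda$ then give $\|M_Af\|_p^p \gtrsim \mu(B_0) \int_1^\infty A(t)/t^{p+1}\,dt$, while $\|f\|_p^p = \mu(B_0)$, so boundedness of $M_A$ forces $A\in B_p$. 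The hypothesis $\mu(X)=\infty$ is essential here: without it one cannot produce arbitrarily large enlargements of $B_0$, which is precisely why the theorem fails on bounded SHT. The main obstacle is the geometric step in necessity, carried out purely from the doubling of $\mu$ and $\mu(X)=\infty$.
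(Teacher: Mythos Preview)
Your proof is correct, and the necessity argument follows the same template as the paper: test $M_A$ on $\chi_{B_0}$, show $\mu\{M_A\chi_{B_0}>\lambda\}\gtrsim A(1/\lambda)\,\mu(B_0)$ for $\lambda\in(0,1]$, and integrate via layer-cake. The difference is in execution. The paper introduces $F(r)=\mu(B(x_0,r))$ and a two-case pseudo-inverse $F^{-1}$, then must separately verify $FF^{-1}(s)\ge s/C_d$ at jump discontinuities of $F$ and treat the atomic case with some care. Your choice to work with the discrete dyadic scales $B(x_0,2^kr_0)$ and pick the largest $k$ with $\mu(B(x_0,2^kr_0))<A(1/\lambda)\,\mu(B_0)$ is cleaner: one application of doubling gives the lower bound directly, and since you only ever enlarge $B_0$, the atom issue never arises. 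For sufficiency the paper simply asserts that P\'erez's original argument goes through without the doubling hypothesis on $A$; your truncation-and-weak-$(1,1)$ argument supplies this explicitly and makes transparent why the resulting integral is $\int_1^\infty A(t)/t^{p+1}\,dt$ rather than $\int_0^\infty$, so that no growth control on $A$ near the origin (which is what doubling would provide) is needed.
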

  \begin{proof}
 
  Since the same argument of \ref{Bpimplies} applies in showing that if $A\in B_p$ then $M_A$ is bounded, we will only show the converse. 
 Assume that $M_A$ is bounded.  Then using the definition, one can show for any measurable set $B$ that  \[M_A(\chi_B)(x) = \sup_{B'\ni x}\frac{1}{A^{-1}(\frac{\mu(B')}{\mu(B'\cap B)})}\] where $B'$ are balls (see for example, 
    \cite{perezwheeden}).  Note that if $x\in B$, $M_A(\chi_B)(x) = 1$ and for $x\notin B$, \[1\geq M_A(\chi_B)(x)>\frac{1}{A^{-1}\left( \frac{\mu(B')}{\mu(B)}\right) },\] where $B'\ni x$.
 Assume for the moment that $X$ is nonatomic and fix $x_0\in X$.  Later we will lift the nonatomic assumption.  Define $r := r_x = \rho(x,x_0)$ for any $x\in X$.  Let
 \begin{equation}
 \label{SHTradius}
 F(r) = \mu(B(x_0,r))
 \end{equation} 
 Clearly $F$ is non-decreasing.  Since $\mu(X) = \infty$, we can define an inverse for $F$ as   
 
  \begin{displaymath}
    F^{-1}(s) = \left\{
      \begin{array}{lr}
       \inf\{r: F(r) = s\} & : F(r) = s \text{ for some } r\\
        \sup\{r:F(r)<s\} & \text{otherwise}
      \end{array}
    \right.
 \end{displaymath}

 Note that since $x_0$ is not an atom, then every $s$ has a well-defined inverse.
 Hence, if $B' = B(x_0,r)$, we have that \[\mu\left( x\in  X: M_A(\chi_B)(x)>t\right) \geq \mu\left( x\in  X: \frac{1}{A^{-1}\left( \frac{\mu(B')}{\mu(B)}\right)}>t\right) = \mu\left( x\in  X: \frac{1}{A^{-1}\left( \frac{F(r)}{\mu(B)}\right) }>t\right)\] 
  by \ref{SHTradius}.
 
 Denote $\mu(B) = b$.  
 Since $M_A$ is bounded on $L^p$, then surely it is bounded for the function $f = \chi_B$, so using the above estimate:
 \[ \infty >\int_XM_A(\chi_B)(x)^pd\mu \geq p\int_0^{\infty}t^p\mu\left( x\in  X: \frac{1}{A^{-1}\left( \frac{F(r)}{b}\right) } >t\right) \frac{dt}{t}\] \[
 \geq p\int_0^{\infty}t^p\mu\left(  A\left(\frac{1}{t}\right)b>F(r)\right) \frac{dt}{t}
 \geq p\int_0^{\infty}t^p\mu\left(   F^{-1}A\left(\frac{1}{t}\right)b>r\right) \frac{dt}{t}\]
 since $\{r: F^{-1}(s)>r\}\subseteq \{r:s>F(r)\}$.  Continuing, the above
 \[= p\int_0^{\infty}t^pFF^{-1}\left( A\left( \frac{1}{t}\right)b\right) \frac{dt}{t} \geq p\int_0^{\infty}t^pA\left( \frac{1}{t}\right)\frac{b}{C_d}\frac{dt}{t}. \] 
 Here we have used an important fact from the definition of $F^{-1}$: if $F(r_1) = FF^{-1}(s)$ then \[\frac{1}{C_d}s\leq \frac{1}{C_d}F(r_1+\ee)\leq F\left( \frac{r_1+\ee}{2}\right) \leq F(r_1) = FF^{-1}(s),\] for some $\ee>0$ such that $F(\frac{r_1+\ee}{2})\leq F(r_1)$.  This is where doubling of the measure $\mu$ is used.  For many $s$, we have $FF^{-1}(s) = s$ and do not need to use the above fact.  However, for certain $s$ that fall in the range on the $s$-axis where $F$ has a jump discontinuity; doubling allows us to replace $FF^{-1}\left( A(\frac{1}{t})b\right) $ by $A\left( \frac{1}{t}\right)\frac{b}{C_d}$.  
 
 Call $\frac{b}{C_d} = C'$.
 We can finish the argument by recalling that
 \[
 \infty > C'p\int_0^{\infty}t^{p-1}A\left( \frac{1}{t}\right) dt> C'p\int_0^{t_0}t^{p-1}A\left( \frac{1}{t}\right) dt = C'p\int_{1/t_0}^{\infty}\frac{A(t)}{t^p}\frac{dt}{t}.\]

 Thus we have actually found that \[\frac{p}{C_d}\int_{1/t_0}^{\infty}\frac{A(t)}{t^p}\frac{dt}{t}\leq \|M_A\|_{L^p}^p\] since $\|\chi_B\|_{L^p}^p = b$.

 Now, if $x_0$ is an atom, then $\mu(x_0) = \mu(B(x_0,\ee)) = \de$ for some $\ee$, $\de >0$, so $F^{-1}(s)$ is defined for $s\geq\de$.  For atoms, the only issue that arises in the proof is when we take $F^{-1}\left( A\left( \frac{1}{t}\right) b\right) $, so as long as $A\left( \frac{1}{t}\right)\geq\frac{\de}{b}$, no adjustment to our argument is needed.  
 
 In other words, $F^{-1}\left( A\left( \frac{1}{t}\right)b\right)$ only fails to be defined when $A\left( \frac{1}{t}\right)<\frac{\de}{b}$.
 Therefore, if $\de \leq b$, $F^{-1}\left( A\left( \frac{1}{t}\right)b\right)$ can only fail to be defined when $t> 1$ since $A$ is increasing and $A(1) = 1$.  
 
 However, for $t> 1$, $\mu(x\in X: M_A(\chi_B)(x)>t) = 0$.  We only need to consider level sets where $t< 1$ in our argument, and $F^{-1}$ exists for all points in these sets. 
 
 Since from the beginning, we were free to choose any measurable $B$, let $B = B(x_0,\ee)$ for some atom $x_0$.  Then \[b = \mu(B) = \mu(B(x_0,\ee)) = \de,\] as wanted.  Therefore, the above proof holds for spaces where every point is an atom as well.

 \end{proof}
  
 \begin{remark}
 If $\mu(X)<\infty$, then there exists some $R$ such that $B(x_0,R) = X$.  Then in the above proof, $\sup\{r:F(r)<F(R) +1\} = \infty$, so $F^{-1}$ would not be well-defined.  Since $\mu(X) = \infty$, then for all $s$ there exists $r$ such that $F(r)\geq s$.
 
 Note that we could also have defined $F^{-1}$ using $\inf\{r:F(r)>s\}$ instead of $\sup\{r:F(r)<s\}$.  This definition would have led to the same result.
 
 \end{remark}
  
We now give some background so we can state an earlier theorem of P\'erez about maximal function bounds.
 Let $Y$ be a Banach function space normed in the following manner: $\|v\|_{Y,Q} = \sup\{\fint_Q |fg|d\mu :g\in Y', \|g\|_{Y'}\leq 1\}$, where $Y'$ is the associate space: \[Y' = \left\lbrace f \text{ measurable} :\|f\|_{Y'}:=\sup_{\|g\|_Y \leq 1}\int_X|fg|d\mu<\infty\right\rbrace.\]  Banach function spaces include Orlicz spaces and $L^p$ spaces.  For example, if $Y = L^A$ with $A$ a Young function, then $\|\cdot\|_Y = \|\cdot\|_A$ and $Y' = L^{\bar{A}}$.  For more information, see \cite{perez95} or \cite{lernerpositivedyadic}.  Let \[M_{Y'}f(x) = \sup_{Q\ni x}\|f(x)\|_{Y',Q}\] for cubes Q, be the associated maximal function.

  Now we can state the theorem of P\'erez \cite{perez95}.
  \begin{theorem}
  \label{onebumpimpliesM}
  Assume $M_{Y'}$ is bounded on $L^p(\R ^n)$ where $Y$ is a Banach function space and let $(w,v)$ be a pair of weights such that \begin{equation}
  \label{onebump}
  \sup_Q\|w\|_{p,Q}\|v^{-1}\|_{Y,Q}\leq K.
  \end{equation}
  Then \[\int_ {\R^n}(M_{Y'}fw)^pdx\leq c\int_{\R^n}(fv)^pdx\] for all nonnegative $f$. 
  \end{theorem}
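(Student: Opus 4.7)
My approach is the classical two-weight sparse template: linearize $M_{Y'}$ via a Calderón--Zygmund/principal-cubes decomposition of its level sets, combine the bump hypothesis with generalized Hölder in the pair $(Y,Y')$, and close up using the unweighted $L^{p}$ bound on $M_{Y'}$ given by hypothesis.

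First, by density and truncation I would reduce to $f \geq 0$ bounded and compactly supported, and replace $M_{Y'}$ by its dyadic analogue $M_{Y'}^{d}$ (which dominates $M_{Y'}$ up to a dimensional constant after a shift argument). For each $k\in\Z$, extract the maximal dyadic cubes $\{Q_{j,k}\}_{j}$ satisfying $\|f\|_{Y',Q_{j,k}}>2^{k}$. By maximality these cubes are pairwise disjoint, $\|f\|_{Y',Q_{j,k}}\lesssim 2^{k}$ via a one-step norm comparison with the dyadic parent, and the level set $\Omega_{k}:=\{M_{Y'}^{d}f>2^{k}\}$ coincides with $\bigsqcup_{j}Q_{j,k}$ modulo null sets. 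Setting $E_{j,k}:=Q_{j,k}\setminus\Omega_{k+1}$ then produces a pairwise disjoint family with $|E_{j,k}|\gtrsim|Q_{j,k}|$ by a standard sparseness estimate, so the collection $\{Q_{j,k}\}$ is sparse in the sense of Lerner.

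Layer-cake together with sparseness give
\begin{align*}
\int_{\R^{n}}(M_{Y'}f\cdot w)^{p}\,dx
&\lesssim \sum_{k}2^{kp}\,w^{p}(\Omega_{k}\setminus\Omega_{k+1}) \lesssim \sum_{j,k}\|f\|_{Y',Q_{j,k}}^{p}\,w^{p}(Q_{j,k}) \\
&= \sum_{j,k}\|f\|_{Y',Q_{j,k}}^{p}\,\|w\|_{p,Q_{j,k}}^{p}\,|Q_{j,k}|.
\end{align*}
Factoring $f=(fv)\cdot v^{-1}$ and applying the generalized Hölder inequality for the dual pair $(Y,Y')$ on each cube, combined with the bump $\|w\|_{p,Q_{j,k}}\|v^{-1}\|_{Y,Q_{j,k}}\leq K$, I expect each summand to collapse to at most $K^{p}\|fv\|_{Y',Q_{j,k}}^{p}\,|Q_{j,k}|$. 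Since $\|fv\|_{Y',Q_{j,k}}\leq M_{Y'}(fv)(y)$ for every $y\in E_{j,k}\subset Q_{j,k}$ and the $E_{j,k}$ are pairwise disjoint,
\[\int_{\R^{n}}(M_{Y'}f\cdot w)^{p}\,dx \lesssim K^{p}\int_{\R^{n}}M_{Y'}(fv)(y)^{p}\,dy \lesssim K^{p}\int_{\R^{n}}(fv)^{p}\,dx,\]
the last inequality being the hypothesized $L^{p}$ boundedness of $M_{Y'}$ on $\R^{n}$.

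The main obstacle is the bump-plus-Hölder step that passes from $\|f\|_{Y',Q_{j,k}}\|w\|_{p,Q_{j,k}}$ to $K\|fv\|_{Y',Q_{j,k}}$ without losing more than a constant. For Orlicz $Y=L^{A}$ this is a clean triple-Young-function factorization compatible with $(A,\bar{A})$; in the abstract Banach function space setting the identity $f=(fv)\cdot v^{-1}$ must be exploited at the level of the associate-norm duality $\|f\|_{Y',Q}=\sup\{\fint_{Q}fg:\|g\|_{Y,Q}\le1\}$, and the constants must be tracked so they align with the bump $K$ and with the sparseness factor $|E_{j,k}|/|Q_{j,k}|$ that converts the sum-over-$Q_{j,k}$ into an integral against $M_{Y'}(fv)^{p}$.
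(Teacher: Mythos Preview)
The paper does not actually prove this theorem; it is quoted from \cite{perez95} as background, so there is no in-paper argument to compare against. Your level-set/sparse decomposition is exactly the standard route P\'erez uses, so on that level the strategy is correct.

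There is, however, a genuine gap at the step you yourself flag as ``the main obstacle.'' As the theorem is stated here, the operator on the left is $M_{Y'}$, so after the stopping-time decomposition each selected cube carries the quantity $\|f\|_{Y',Q}$, not $\fint_Q f$. To pass from $\|f\|_{Y',Q}\|w\|_{p,Q}$ to $K\,\|fv\|_{Y',Q}$ you would need a pointwise-product bound of the form
\[
\|f\|_{Y',Q}=\|(fv)\,v^{-1}\|_{Y',Q}\le C\,\|fv\|_{Y',Q}\,\|v^{-1}\|_{Y,Q},
\]
i.e.\ that multiplication by a $Y$-function is bounded on $Y'$. This is \emph{not} the generalized H\"older inequality (which only controls $\fint_Q fg$) and it fails for general Banach function spaces: already for $Y=Y'=L^2$ one has $g=h=x^{-1/3}\in L^2([0,1])$ with $gh\notin L^2$. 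So the ``triple-Young factorization'' you invoke does not exist at this level of generality, and the argument as written does not close.

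What is going on is that the result in \cite{perez95} is for the Hardy--Littlewood maximal function $M$, not for $M_{Y'}$; the occurrence of $M_{Y'}$ on the left in the displayed conclusion appears to be a slip in the paper. If you rerun your proof with $M$ in place of $M_{Y'}$, the stopping cubes satisfy $\fint_{Q_{j,k}}f\sim 2^k$, the genuine generalized H\"older inequality gives $\fint_Q f\le C\,\|fv\|_{Y',Q}\|v^{-1}\|_{Y,Q}$, the bump absorbs $\|v^{-1}\|_{Y,Q}\|w\|_{p,Q}$, and the remainder of your argument (sparseness plus the unweighted $L^p$ bound on $M_{Y'}$) goes through verbatim. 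That is precisely P\'erez's proof.
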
 
  
  This theorem is also true in an SHT (see \cite{perezwheeden}).

  \section{Preliminary weighted results}
 To prove our two-weighted extrapolation result, we need the Reverse H\"older classes (\cite{CUN}).
  \begin{definition}
  Fix $1<p<\infty$.  We say that a weight $\rho$ belongs to the Reverse H\"older class $p$, denoted $RH_p$, if for all cubes $Q$,
  \begin{equation} 
  \label{revholder}
  \left( \fint_Q\rho^pd\mu\right) ^{1/p}\leq \kappa \f{\rho(Q)}{\mu(Q)}.
  \end{equation}
  The smallest such $\kappa$ is called $[\rho]_{RH_p}$.
  \end{definition}
  Though general cubes are not well-defined in an SHT, we have a dyadic cube construction (see \cite{MR1104656}, \cite{MR2901199}). In our main theorem, we only work with dyadic cubes, so we can take \ref{revholder} with respect to dyadic cubes.  We remark that all $A_{\infty}$ weights satisfy the reverse H\"older property.
    
  We recall the definition of sparse families and operators, intimately related to CZOs by recent work of Lerner \cite{Lern2012}.  Sparse families generalize the Calder\'on-Zygmund decomposition.
  \begin{definition}
  A sparse family $S = \cup_kS_k$, $S_k\in \D_k$ on $\D$ is a collection of dyadic cubes such that for $Q\in S$, \[\mu\left( \bigcup_{Q'\subsetneq Q, Q'\in S}Q'\right) \leq \frac{\mu(Q)}{2}.\] 
  
  \end{definition}
  A way to construct a disjoint family from a sparse one, where the measure of each cube in $S$ is approximately the same as a disjoint partner, is the following:
  \begin{definition}
  Let \[E(Q) = Q\setminus\bigcup_{Q'\subsetneq Q, Q'\in S}Q'.\]
  \end{definition}
  It is easy to see using the definitions that $E(Q)$ is disjoint and that \[\mu(E(Q))\leq \mu(Q)\leq 2\mu(E(Q)).\]
  
  \begin{definition}
  A sparse operator is a simple averaging operator over a select set of cubes, that is:
   \[T^S(f) = \sum_{Q\in S}(\fint_Qf)\cdot \chi_Q.\]
  \end{definition}
 
  \begin{definition}\label{calderon}
  We'll say that $K:X\times X\setminus\lbrace{x=y\rbrace}\to R$ is a Calder\'on-Zygmund kernel if there exist $\eta>0$ and $C<\infty$ such that for all $x_0\neq y\in X$ and $x\in X$ it satisfies the decay condition:
  \begin{equation}
  \label{decay}
  |K(x_0,y)|\leq \frac{C}{|B(x_0,\rho(x_0,y))|}
  \end{equation}
  and the smoothness condition for $\rho(x_0,x)\leq \eta\rho(x_0,y)$:
  \begin{equation}
  \label{smoothness}
  |K(x,y)-K(x_0,y)|\leq\left(\frac{\rho(x,x_0)}{\rho(x_0,y)}\right)^{\eta}\frac{1}{|B(x_0,\rho(x_0,y))|},
  \end{equation}
  \begin{equation*}
  |K(y,x)-K(y,x_0)|\leq\left(\frac{\rho(x,x_0)}{\rho(x_0,y)}\right)^{\eta}\frac{1}{|B(x_0,\rho(x_0,y))|}.
  \end{equation*}
  \end{definition}
  
  If in addition, the associated operator $T$ is bounded on $L^2$, we call it a \emph{Calder\'on-Zygmund operator}.
  
  \begin{lemma}
  \label{sparsefamcriterionRH}
  If $w\in A_{\infty}$, then
   \[w(Q)\leq \gamma w(E(Q))\] where $\gamma$ depends on $X$ and on $[w]_{A_{\infty}}$ only. 
  \end{lemma}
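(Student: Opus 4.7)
The plan is to upgrade $w\in A_\infty$ to a quantitative $A_p$ bound for some $p<\infty$ whose value and constant depend only on $[w]_{A_\infty}$ and the doubling constant $C_d$ of $(X,\rho,\mu)$, and then to convert that $A_p$ bound into a lower comparison between $w(E)$ and $\mu(E)/\mu(Q)$ on subsets of $Q$. The sparseness inequality $\mu(E(Q))\geq \mu(Q)/2$ then closes the argument with a single application of this comparison.

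\emph{Step 1.} Since $(X,\rho,\mu)$ is a doubling SHT and $w\in A_\infty$, the SHT analogue of the classical $A_\infty=\bigcup_{p>1}A_p$ theorem (available through the Aimar--Bernardis--Iaffei version of Coifman--Fefferman/Gehring self-improvement) yields $p=p([w]_{A_\infty},C_d)<\infty$ and a constant $[w]_{A_p}$ bounded in terms of $[w]_{A_\infty}$ and $C_d$ such that $(\fint_Q w)(\fint_Q w^{1-p'})^{p-1}\leq [w]_{A_p}$ for every dyadic cube $Q$. I would take this as a black box.

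\emph{Step 2.} For an arbitrary measurable $E\subset Q$, write $\mu(E)=\int_E w^{1/p}w^{-1/p}\,d\mu$ and apply H\"older with exponents $(p,p')$, noting $w^{-p'/p}=w^{1-p'}$, to get
\[\mu(E)\leq w(E)^{1/p}\Big(\int_E w^{1-p'}\Big)^{1/p'}\leq w(E)^{1/p}\Big(\int_Q w^{1-p'}\Big)^{1/p'}.\]
The $A_p$ bound gives $(\int_Q w^{1-p'})^{p-1}\leq [w]_{A_p}\mu(Q)^p/w(Q)$, which after raising to $1/p$ and combining with the display above and rearranging yields the key inequality
\[\frac{w(E)}{w(Q)}\geq \frac{1}{[w]_{A_p}}\left(\frac{\mu(E)}{\mu(Q)}\right)^p.\]

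\emph{Step 3.} Instantiating $E=E(Q)$ and invoking $\mu(E(Q))\geq \mu(Q)/2$ gives $w(E(Q))\geq (2^p[w]_{A_p})^{-1}w(Q)$, i.e. $w(Q)\leq \gamma\, w(E(Q))$ with $\gamma=2^p[w]_{A_p}$, which depends only on $[w]_{A_\infty}$ and $X$ as required.

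The main obstacle is not conceptual but the quantitative bookkeeping in Step~1: verifying in the SHT setting (where one only has doubling, no linear structure) that $p$ and $[w]_{A_p}$ are genuinely controlled by $[w]_{A_\infty}$ and $C_d$. A direct attempt via reverse H\"older and H\"older, starting from $w(Q\setminus E(Q))\leq [w]_{RH_p}(\mu(Q\setminus E(Q))/\mu(Q))^{1/p'}w(Q)\leq [w]_{RH_p}2^{-1/p'}w(Q)$, would require $[w]_{RH_p}<2^{1/p'}$, which is not automatic from the sharp reverse H\"older inequality; this is why I route through $A_p$, where no such smallness is needed and the lower bound for $w(E)/w(Q)$ drops out mechanically.
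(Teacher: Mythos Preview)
Your proof is correct, but the paper's argument is considerably shorter and more direct. The paper works with the complement $T=Q\setminus E(Q)$: sparseness gives $\mu(T)\leq \mu(Q)/2$, and the paper invokes the $A_\infty$ condition in the form ``$\mu(T)\leq \alpha\mu(Q)$ with $\alpha<1$ implies $w(T)\leq \beta w(Q)$ for some $\beta<1$ depending only on $\alpha$ and $[w]_{A_\infty}$'' to conclude $w(T)\leq \beta w(Q)$; subtracting from $w(Q)=w(T)+w(E(Q))$ yields $w(Q)\leq (1-\beta)^{-1}w(E(Q))$. That is the entire proof.

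What you do instead is re-derive that qualitative $A_\infty$ consequence quantitatively: you pass through $A_\infty=\bigcup_p A_p$ and the standard H\"older computation to get the explicit lower bound $w(E)/w(Q)\geq [w]_{A_p}^{-1}(\mu(E)/\mu(Q))^p$, and then plug in $E=E(Q)$. This buys you an explicit constant $\gamma=2^p[w]_{A_p}$, at the cost of importing the self-improvement machinery (Step~1) as a black box in the SHT setting. The paper simply takes the $A_\infty$ comparison of measures as the definition and avoids any detour through $A_p$ or reverse H\"older. Your closing remark about the reverse H\"older route failing for lack of smallness is well taken, but note that the paper sidesteps that issue entirely by working on the complement and subtracting rather than trying to bound $w(E(Q))$ from below directly.
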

  \begin{proof}
  Let $\bigcup_{Q'\subsetneq Q, Q'\in S}Q' = T$.  Due to the condition of sparse family, we have 
  \[\mu(T)\leq \frac{\mu(Q)}{2}\] so by the definition of $A_{\infty}$ we have that \[w(T)\leq \beta w(Q)\] for some $0<\beta<1$.  Thus, $w(Q) = w(T)+w(E(Q)) \leq \beta w(Q)+w(E(Q))$ so $(1-\beta)w(Q)\leq w(E(Q))$ or \[w(Q)\leq \frac{1}{1-\beta}w(E(Q)).\] 
  \end{proof}
  
  We now come to the proof of \ref{mainextrapolation}.

   \section{Proof of two-weighted theorem}
   \begin{proof}
   First, in Step 1, we show for $p=1$ and any $\rho\in RH_{q'}$ where $q$ is in the above range that \[\int_XT^Sfw\rho d\mu(y)\leq C\int_XM_{Y'}(fv)(y)\rho d\mu(y).\]  We then move to the main statement via an extrapolation argument in Step 2.
   
   \emph{Step 1}
   For this step, we use the notation $w\rho(Q) = \int_Qw\rho d\mu$.  Then
   \[\int_XT^Sfw\rho d\mu(y) = \sum_{Q\in S}\left( \fint_Qf\right) w\rho (Q) = \sum_{Q\in S}\left( \fint_Qf\cdot v\cdot v^{-1}\right) w\rho (Q)\] \[\leq \sum_{Q\in S}\|fv\|_{Y',Q}\|v^{-1}\|_{Y,Q}w\rho (Q) = \sum_{Q\in S}\|fv\|_{Y',Q}\|v^{-1}\|_{Y,Q}\frac{w\rho (Q)}{\rho (Q)}\rho (Q) := I.\]  Now, estimating part of this expression, we get \[\|v^{-1}\|_{Y,Q}\frac{w\rho (Q)}{\rho (Q)} \leq \|v^{-1}\|_{Y,Q}\left( \int_Qw^qd\mu\right) ^{1/q}\left( \int_Q\rho ^{q'}d\mu\right) ^{1/q'}\cdot \f{1}{\rho (Q)}\] \[= \|v^{-1}\|_{Y,Q}\left( \fint_Qw^qd\mu\right) ^{1/q}\left( \fint_Q\rho ^{q'}d\mu\right) ^{1/q'}\f{\mu (Q)}{\rho (Q)} \leq K_q\kappa_q,\] by \ref{onebump} and \ref{revholder}.
   Then \[I\leq K_q\kappa_q \sum_{Q\in S}\|fv\|_{Y',Q}\rho (Q) \leq \gamma K_q\kappa_q\sum_{Q\in S}\|fv\|_{Y',Q}\rho (E(Q))\] due to \ref{sparsefamcriterionRH}.  The above is bounded by \[C\sum_{Q\in S}\int_{E(Q))}M_{Y'}(fv)d\rho(y)\leq C\int_XM_{Y'}(fv)\rho d\mu(y),\] where $C = \gamma K_q\kappa_q$.
   This gives Step 1.
   
   \emph{Step 2:}
   For extrapolation, we'll follow the procedure of Rubio de Francia.  However, we need to adapt this to Reverse H\"older classes since Step 1 is a statement for all $\rho \in RH_{q'}$.
   
   Define the Rubio de Francia operator \[Rh = \sum_{k=0}^{\infty}\f{M^kh}{2^k\|M\|_{L^{p'/q'}}^k}.\]
   Let \[\tilde{R}(h) = R(h^{q'})^{1/q'}.\]
   Note that for $h\in L^{p'}(d\mu)$,
   \begin{enumerate}
   \item{$h\leq \tilde{R}(h)$}
   \item{$\|\tilde{R}(h)\|_{p'}\leq 2^{1/q'}\|h\|_{p'}$}
   \item{$M(R(h^{q'}))\leq 2\|M\|_{L^{p'/q'}}R(h^{q'})$}
   \end{enumerate}
   Item 1 is clear since all terms are positive.  Item 2 can be seen by the following argument: \[\|\tilde{R}(h)\|_{p'} = \|R(h^{q'})\|_{p'/q'}^{1/q'} = \left( \|\sum_k\f{M^k(h^{q'})}{2^k\|M\|_{p'/q'}^k}\|_{p'/q'}\right) ^{1/q'}\leq\] \[\left( \sum_k\f{\|M^k(h^{q'})\|_{p'/q'}}{2^k\|M\|_{p'/q'}^k}\right) ^{1/q'}\leq  \left( \sum_k\f{1}{(2\|M\|_{p'/q'})^k}\|M\|_{p'/q'}^k\|h^{q'}\|_{p'/q'}\right) ^{1/q'}\] \[= \|h^{q'}\|_{p'/q'}^{1/q'}2^{1/q'} = 2^{1/q'}\|h\|_{p'}\]
   
   Item 3 implies that $R(h^{q'})=\tilde{R}(h)^{q'}\in A_1$, and since by a similar calculation, we also have $\tilde{R}(h)\in A_1$, then $\tilde{R}(h)\in RH_{q'}$.  (All of these norms are with respect to d$\mu.$)  Notice that we need $p'>q'$ (equivalently $p<q$) for the maximal function norm to be bounded.  
   
   Then we run the Rubio de Francia machine:  by duality,
   \[\|T^Sfw\|_p = \int_XT^Sfw\cdot hd\mu\] for some $h\in L^{p'}(d\mu)$ with norm 1.  Now by property 1 of $\tilde{R}h$, followed by the fact that $R(h^{q'})^{1/q'}\in RH_{q'}$,\[\|T^Sfw\|_p\leq \int_XT^Sfw\cdot \tilde{R}(h)d\mu \leq C\int_XM_{Y'}(fv)\tilde{R}(h)\leq\] \[ C\left( \int_XM_{Y'}(fv)^pd\mu\right) ^{1/p}\left( \int_X(\tilde{R}h)^{p'}d\mu\right) ^{1/p'}\leq C\left( \int_XM_{Y'}(fv)^pd\mu\right) ^{1/p}\left( \int_Xh^{p'}d\mu\right) ^{1/p'}\] \[= C(\int_XM_{Y'}(fv)^pd\mu)^{1/p}\] 
   Taking $p$th powers, we get \ref{TsandMy'} as wanted.
   
   Tracing the constant from Step 1, we see that $C = 2^{1/q'}K_q\cdot[\tilde{R}(h)]_{RH_{q'}} \cdot \gamma,$ where $\gamma$ depends on $[\tilde{R}h]_{A_\infty}, SHT$.  Since we can bound $[\tilde{R}(h)]_{RH_{q'}}\leq [\tilde{R}(h)]_{A_1}\leq (2\|M\|_{p'/q'})^{1/q'}$ we have that $C\leq    2^{2/q'}K_q \|M\|_{p'/q'}^{1/q'} \cdot \gamma$.
   
   \end{proof}

  \section{Corollaries and examples}
  \subsection{Corollaries to \ref{mainextrapolation}}
  \begin{corollary}
  \label{Lernercorollary}
  Let the conditions of \ref{mainextrapolation} hold.  Then \[\|Tf\|_{L^p(w^p)}\leq C\|M_{Y'}(fv)\|_{p}.\]
  \end{corollary}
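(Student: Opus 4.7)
The plan is to reduce the corollary to Theorem \ref{mainextrapolation} via Lerner's sparse domination theorem, which was already invoked in the introduction as the bridge between general CZOs and sparse operators. Concretely, Lerner's result \cite{lerner-IMRN2012}, together with its extensions to spaces of homogeneous type in \cite{ACM} and \cite{AV-2012}, gives a constant $c_T$ depending on the CZO $T$ (and on finitely many adjacent dyadic grids) such that
$$\|Tf\|_{L^p(w^p)} \leq c_T \sup_{S} \|T^S f\|_{L^p(w^p)},$$
where the supremum ranges over all sparse families $S$ in the chosen grids. This reduces the problem to obtaining a uniform bound on $\|T^S f\|_{L^p(w^p)}$.

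The second step is to apply Theorem \ref{mainextrapolation} directly to each sparse operator $T^S$. The hypothesis $\sup_Q\|w\|_{q,Q}\|v^{-1}\|_{Y,Q}\leq K$ is assumed for all dyadic cubes $Q$, so in particular it holds for every cube belonging to any sparse family $S$, with a constant $K$ that does not depend on $S$. Rewriting the conclusion of Theorem \ref{mainextrapolation} as a norm inequality yields
$$\|T^S f\|_{L^p(w^p)} = \left(\int_{\R^n}(T^S f \cdot w)^p\,dy\right)^{1/p} \leq C \|M_{Y'}(fv)\|_p,$$
with $C$ independent of the choice of $S$. Taking the supremum over sparse families $S$ and combining with Lerner's bound produces the stated inequality.

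The proof is thus essentially a bookkeeping argument, and the only point that merits verification is the uniformity of the constant in $S$. Tracing through the explicit constant computed at the end of Step 2 in the proof of Theorem \ref{mainextrapolation}, namely $C \leq 2^{2/q'} K_q \|M\|_{p'/q'}^{1/q'}\gamma$, one sees that $K_q$ is the uniform bump constant, $\|M\|_{p'/q'}$ depends only on $p$ and $q$, and $\gamma$ depends only on $[\tilde{R}h]_{A_\infty}$ and on the structural constants of the underlying SHT, none of which reference a particular sparse family. Hence the corollary follows immediately from the two ingredients above, with no further analysis required.
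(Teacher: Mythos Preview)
Your proof is correct and follows essentially the same approach as the paper: invoke Lerner's sparse domination in the Banach-function-space form $\|Tf\|_Z \leq C(\text{SHT},T)\sup_{\D,S}\|T^Sf\|_Z$ with $Z=L^p(w^p)$, then apply Theorem~\ref{mainextrapolation} to each $T^S$. Your added discussion of the uniformity of the constant in $S$ is a helpful clarification that the paper leaves implicit.
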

  \begin{proof}
  By a decomposition of Andrei Lerner \cite{lerner-IMRN2012}, proved in the SHT case in \cite{AV-2012}, we have 
  \begin{equation}
  \|T(f)\|_Z\leq C(SHT, T)\sup_{\D,S} \|T^S(f)\|_Z
  \end{equation}
  where $\D$ is a dyadic grid and $Z$ is a Banach function space, such as $L^p(u)$ (note that $Z$ is not necessarily the same as the Banach function space $Y$ in the theorem).  Recall that $L^p(u)$ is a Banach function space whenever $ud\mu$ is a locally integrable positive function, that is, when $u$ is any weight.  There are a finite number of distinct dyadic grids in any SHT (\cite{MR2901199}).
  \end{proof}
  
  \begin{corollary}
  \label{MYcorollary}
  Let $M_{Y'}$ be bounded on $L^p(\mu)$, and the conditions of \ref{mainextrapolation} hold.  Then,
  \[\|Tf\|_{L^p(w^p)}\leq C\|f\|_{L^p(v^p)}.\]
  \end{corollary}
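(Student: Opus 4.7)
The plan is to chain together Corollary \ref{Lernercorollary} with the hypothesis that $M_{Y'}$ is bounded on $L^p(\mu)$. The first ingredient controls $\|Tf\|_{L^p(w^p)}$ by the $L^p$ norm of $M_{Y'}(fv)$, and the second ingredient controls this maximal function norm by the $L^p$ norm of $fv$, which unravels to $\|f\|_{L^p(v^p)}$. So the corollary is essentially a composition of two already-established bounds.

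Concretely, I would first invoke Corollary \ref{Lernercorollary} to obtain
\[
\|Tf\|_{L^p(w^p)} \leq C\,\|M_{Y'}(fv)\|_{L^p(\mu)}.
\]
Then, applying the assumed $L^p(\mu)$ boundedness of $M_{Y'}$ to the nonnegative function $fv$, I get
\[
\|M_{Y'}(fv)\|_{L^p(\mu)} \leq C'\,\|fv\|_{L^p(\mu)} = C'\left(\int_X |f|^p v^p\, d\mu\right)^{1/p} = C'\,\|f\|_{L^p(v^p)}.
\]
Composing the two estimates yields the desired inequality with constant $CC'$, where $C$ inherits the dependence tracked at the end of the proof of Theorem \ref{mainextrapolation} and $C'$ is the operator norm of $M_{Y'}$ on $L^p(\mu)$.

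There is no serious obstacle here: all of the real work lies upstream, in Theorem \ref{mainextrapolation} (where the sparse/extrapolation argument produces the pointwise-type domination by $M_{Y'}(fv)$) and in the Lerner-type decomposition used to pass from $T^S$ to $T$ in Corollary \ref{Lernercorollary}. If one wished to avoid assuming $L^p$ boundedness of $M_{Y'}$ as a hypothesis, one could instead invoke Theorem \ref{regularityBp} (when $Y' = L^{\bar B}$ for a Young function $\bar B \in B_p$) to verify it, but for the statement as given the proof is just a two-line composition.
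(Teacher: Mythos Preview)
Your proof is correct and is exactly the intended argument: the paper does not even write out a proof for this corollary, treating it as immediate from Corollary~\ref{Lernercorollary} together with the assumed $L^p(\mu)$ boundedness of $M_{Y'}$. Your two-line composition is precisely what the author has in mind.
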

 
  \begin{remark}
  We can apply \ref{mainextrapolation} to the case $v = w = u^p$ and $Y' = L^p$.  Then our assumption becomes the $A_p$ condition.  However, \ref{MYcorollary} does not hold since $M_{Y'} = M_p$ is not bounded on $L^p$. 
  \end{remark}
  
  \begin{remark}
  Note also that the endpoint case of $q=p$ fails only due to the blowup of the maximal function on $L^{p'/q'}$.  This might give insight on how to prove or disprove the separated bump conjecture, whose assumptions involve two such endpoint statements (see \cite{ACM}).
  \end{remark}
 
  \begin{corollary}
  Let $A$ be a Young function such that $\bar{A}\in B_p$ and the conditions of \ref{TsandMy'} hold.  Then \[\|Tf\|_{L^p(w^p)}\leq C\|f\|_{L^p(v^p)}.\]
  \end{corollary}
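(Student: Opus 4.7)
The plan is to reduce this corollary directly to the chain of results already established in the paper. Concretely, I would specialize the Banach function space $Y$ in the hypotheses of Theorem \ref{TsandMy'} (equivalently Theorem \ref{mainextrapolation}) to the Orlicz space $Y = L^A$, so that the associate space is $Y' = L^{\bar A}$ and the associated maximal operator satisfies $M_{Y'} = M_{\bar A}$. With this identification, the bump condition $\sup_Q\|w\|_{q,Q}\|v^{-1}\|_{A,Q}\leq K$ coming from the conditions of Theorem \ref{TsandMy'} is precisely the hypothesis being used.

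The key observation is that the $B_p$ hypothesis on $\bar A$ gives exactly the $L^p$-boundedness of $M_{\bar A}$ that is needed as input to Corollary \ref{MYcorollary}. Indeed, since $\bar A\in B_p$, one invokes Theorem \ref{Bpimplies} (on $\R^n$) or Theorem \ref{regularityBp} (in the SHT setting, where doubling of $\bar A$ is not required) to conclude that
\[
\|M_{\bar A} f\|_{L^p(\mu)}\leq c\,\|f\|_{L^p(\mu)}.
\]
The standard regularity hypotheses on $\bar A$ (convexity, $\bar A(t)/t$ increasing, $\bar A(1)=1$, $\bar A(t)\to\infty$) are automatic from the definition of the complementary Young function together with a harmless normalization, so invoking Theorem \ref{regularityBp} is legitimate.

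Once $M_{Y'}$ is known to be bounded on $L^p(\mu)$, I would simply apply Corollary \ref{MYcorollary} with this choice of $Y$, which yields
\[
\|Tf\|_{L^p(w^p)}\leq C\,\|M_{Y'}(fv)\|_{L^p(\mu)}\leq C'\,\|fv\|_{L^p(\mu)}=C'\,\|f\|_{L^p(v^p)},
\]
the second inequality being exactly the $L^p$-boundedness of $M_{\bar A}$ applied to the function $fv$. This is the desired strong two-weight bound for $T$.

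There is essentially no technical obstacle: once the identification $Y = L^A$, $Y' = L^{\bar A}$ is made, the corollary is a one-line composition of Theorem \ref{regularityBp} (or \ref{Bpimplies}) with Corollary \ref{MYcorollary}. The only subtlety worth flagging in the write-up is the verification that $\bar A$ satisfies the regularity hypotheses required to apply the $B_p$ characterization of boundedness of $M_{\bar A}$; this is a routine check using standard properties of complementary Young functions and does not require $A$ or $\bar A$ to be doubling, thanks precisely to the improvement established in Theorem \ref{regularityBp}.
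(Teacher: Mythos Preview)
Your proposal is correct and follows essentially the same approach as the paper: specialize $Y=L^A$ so that $Y'=L^{\bar A}$, invoke Theorem~\ref{Bpimplies} (or \ref{regularityBp}) to obtain the $L^p$-boundedness of $M_{\bar A}$ from the hypothesis $\bar A\in B_p$, and then apply Corollary~\ref{MYcorollary}. The paper's own proof is the one-line version of exactly this argument.
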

  \begin{proof}
  In this case, $Y' = L^{\bar{A}}$, and $M_{\bar{A}}$ is bounded by \ref{Bpimplies}.
  \end{proof} 
  
  Step 2 can be written as a general extrapolation theorem for Reverse H\"older classes:
  \begin{theorem}
  Fix $1<p<\infty$.  Let $S_1(x)$ and $S_2(x)$ be two objects (operators, functions, etc) such that 
  \[\int_XS_1(x)\rho d\mu \leq \int_X S_2(x)\rho d\mu\]
  for all $x\in X$ and for all $\rho \in RH_{q'}$, $q\in (p,p+\ee)$ or $q\in (p,\infty)$.  Then we have that 
  \[\int_X(S_1(x))^p d\mu \leq \int_X (S_2(x))^p d\mu.\]  
  \end{theorem}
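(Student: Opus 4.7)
The plan is to mirror Step~2 of the proof of Theorem~\ref{mainextrapolation}, which is really a Rubio de Francia extrapolation scheme adapted to the Reverse H\"older scale rather than the Muckenhoupt scale. The engine of the argument is the ability to manufacture, from any nonnegative $h \in L^{p'}(d\mu)$, a weight in $RH_{q'}$ whose $L^{p'}$ norm is comparable to that of $h$; this is exactly what the operator $\tilde R$ constructed in Step~2 does. Since the hypothesis here is a quantitative bound paired against an arbitrary $\rho \in RH_{q'}$, one then tests against $\rho = \tilde R(h)$.

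First, I would invoke $L^p$--$L^{p'}$ duality: pick a nonnegative $h \in L^{p'}(d\mu)$ with $\|h\|_{p'} = 1$ realizing (up to arbitrarily small error) $\|S_1\|_p = \int_X S_1\, h\, d\mu$. Then define, exactly as in Step~2,
\[
R(g) \;=\; \sum_{k=0}^{\infty} \frac{M^k g}{(2\|M\|_{L^{p'/q'}})^k}, \qquad \tilde R(h) \;=\; R(h^{q'})^{1/q'}.
\]
The three properties verified in Step~2 carry over word for word, since they made no use of the sparse operator or of the weight pair: namely, (i) $h \leq \tilde R(h)$ pointwise, (ii) $\|\tilde R(h)\|_{p'} \leq 2^{1/q'}\|h\|_{p'}$, and (iii) $R(h^{q'}) = \tilde R(h)^{q'} \in A_1$, whence $\tilde R(h) \in RH_{q'}$ (the last inclusion follows from the $A_1$ property $\fint_Q \tilde R(h)^{q'} \lesssim \mathrm{ess\,inf}_Q \tilde R(h)^{q'} \leq (\fint_Q \tilde R(h))^{q'}$).

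With $\tilde R(h) \in RH_{q'}$ admissible as $\rho$ in the hypothesis, I would chain together pointwise domination, the hypothesis, H\"older's inequality, and the norm control on $\tilde R$:
\[
\|S_1\|_p \;=\; \int_X S_1 h\, d\mu \;\leq\; \int_X S_1 \tilde R(h)\, d\mu \;\leq\; \int_X S_2 \tilde R(h)\, d\mu \;\leq\; \|S_2\|_p \|\tilde R(h)\|_{p'} \;\leq\; 2^{1/q'} \|S_2\|_p.
\]
Raising to the $p$th power yields $\int_X S_1^p \, d\mu \leq 2^{p/q'} \int_X S_2^p\, d\mu$, which is the stated conclusion (with the factor $2^{p/q'}$ absorbed into the comparison, as is standard in Rubio de Francia extrapolation).

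The main obstacle is really one of setup rather than of calculation: one must ensure that $\tilde R(h)$ lies in $RH_{q'}$ and not merely in $A_\infty$, and this forces the specific ansatz $\tilde R(h) = R(h^{q'})^{1/q'}$ rather than $R(h)$. The requirement $p'/q' > 1$, equivalently $q > p$, is exactly the condition needed for $\|M\|_{L^{p'/q'}}$ to be finite and the Rubio de Francia series to converge, so the restriction on $q$ in the hypothesis is precisely the natural one for this method, and no finer handling of the interval $q \in (p,p+\ee)$ is needed beyond observing that $q > p$ suffices.
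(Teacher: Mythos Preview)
Your proposal is correct and is precisely the approach the paper intends: the paper's proof is literally the single word ``Extrapolation,'' meaning one is to rerun Step~2 of Theorem~\ref{mainextrapolation} with $S_1$ and $S_2$ in place of $T^Sfw$ and $M_{Y'}(fv)$, which is exactly what you have written out. Your remark about the harmless constant $2^{p/q'}$ is also apt, since the paper's own Step~2 produces the same factor.
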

  \begin{proof}
  Extrapolation.
  \end{proof}
  
  \subsection{Examples}
  Here we discuss some examples of weights that satisfy the conditions of the extrapolation theorem.  
  
  It is interesting to note the following about power weights.
  \begin{remark}
  Let $w = |x|^{-\alpha}$ and $v = |x|^{-\beta}$ in $\R$.  Then \[\|w\|_{p,Q}\|v^{-1}\|_{p',Q}\leq K\] if and only if $w = v$.  This is due to the stringent requirements on the exponents when we integrate over a cube containing the origin:
  Let $X = \R$ and $Q = [-c,c]$ where $c = \f{\l(Q)}{2}$.  Then (as long as $\alpha\cdot p\neq 1$), \[\left( \fint_Qw^p \right) ^{1/p} = \left( \f{|x|^{-\alpha\cdot p+1}}{D\cdot\l(Q)}\mid ^c_{-c}\right)^{1/p} = D'\l(Q)^{-\alpha}.\]  Similarly, \[\left( \fint_Qv^{-p'}\right) ^{1/p'} = \left( \f{|x|^{\beta\cdot p'+1}}{D\l(Q)}\mid ^c_{-c} \right)^{1/p'} = D'\l(Q)^{\beta},\] where $D$ and $D'$ are constants.  Hence since there are both arbitrarily small and large cubes centered at the origin, in order to have the suprenum over these cubes finite, we must have that $\alpha = \beta$.  Even if we use the Young function $B(t) = t^r$ for $r<p'$, the calculation above for $\left( \fint_Qv^{-r}\right) ^{1/r}$ is the same for cubes around the origin, and we still get that $\alpha = \beta$.   This eliminates certain power weight examples from our consideration. 
  \end{remark}
  Note that the following examples are variations of the classic two-weight "$A_1$ pair" $(u, M_u)$ \cite{MR2797562}.
  \begin{example}
  \normalfont
  Take $M_q$ to be the $L^q$ normalized Hardy-Littlewood maximal function with respect to (dyadic) cubes. We will show that the pair $(u, M_qu)$, where $u$ is a weight, satisfies our assumptions for \ref{mainextrapolation} with the pairs: $(p, B)$ and $(q, B)$, where $B$ is a Young function.

  We have then the following bound (adapted from \cite{ACM}).  For $q>p$ \[M_{q}u(x) \geq \|u\|_{q, Q}\] for all $x\in Q$ so that \[\|u\|_{q,Q}\|M_q(u)^{-1}\|_{B,Q} \leq \|u\|_{q,Q}\|u\|_{q,Q}^{-1}\|\chi_Q\|_{B,Q} = 1.\]  This is true since $\|\|u\|_{B,Q}^{-1}\|_{p',Q} = \|u\|_{B,Q}^{-1}\|1\|_{p',Q} = \|u\|_{B,Q}^{-1}\|\chi_Q\|_{p',Q} = \|u\|_{B,Q}^{-1}.$  Here we have used the normalization $B(1) = 1$.  We also get that \[\|u\|_{p,Q}\|M_q(u)^{-1}\|_{B,Q} \leq  \|u\|_{q,Q}\|M_q(u)^{-1}\|_{B,Q} = 1\] by Holder's inequality.  Hence, the pair $(u, M_q(u))$  satisfies the conditions of \ref{mainextrapolation}.  
  \end{example}

  \begin{example}
  \normalfont
  Take the pair $(M_Au^{1-p}, w)$ where $A$ is a Young function and $u,w$ are weights.  If we also assume that $\|u\|_{A,Q}^{1-p}\|w^{-1}\|_{B,Q}\leq K$ for all $Q$, then \[\|M_Au^{1-p}\|_{p,Q}\|w^{-1}\|_{B,Q}\leq \|u\|_{A,Q}^{1-p}\|w^{-1}\|_{B,Q} < K.\]  Moreover, if we change the first norm to the $\|\cdot\|_{q,Q}$ norm, the bound remains the same.  Thus $(M_Au^{1-p}, w)$ satisfies \ref{mainextrapolation}.
  \end{example}

  \section{Acknowledgments}
  The author would like to thank the NSF for support with a graduate student fellowship.  Additionally, she extends gratitude to Jill Pipher, David Cruz-Uribe, Cristina Pereyra, Tuomas Hyt\"onen and Francesco Di Plinio for helpful comments and discussions.  Thank you also to Carlos P\'erez for accomodations and many motivating discussions in beautiful Sevilla, Spain.

\bibliographystyle{plain}
\bibliography{Thesispaperform092013}

\end{document}